\documentclass[12pt,a4paper]{article}%
\usepackage[utf8]{inputenc}
\usepackage{cite}
\usepackage{hyperref}
\usepackage{amsmath}
\usepackage{amsfonts}
\usepackage{amssymb}
\usepackage{xcolor}
\usepackage{graphicx}%
\providecommand{\U}[1]{\protect\rule{.1in}{.1in}}
\newtheorem{theorem}{Theorem}

\newtheorem{conjecture}[theorem]{Conjecture}
\newtheorem{corollary}[theorem]{Corollary}

\newtheorem{lemma}[theorem]{Lemma}

\newtheorem{problem}[theorem]{Problem}
\newtheorem{proposition}[theorem]{Proposition}

\newenvironment{proof}[1][Proof]{\noindent\textbf{#1.} }{\ \hfill \rule{0.5em}{0.5em}\bigskip}
\graphicspath{{Slike/}{d:/Dropbox/Riste-Sedlar/20260708 King conjecture/Slike/}}

\begin{document}

\title{On the odd independence number of the Queen graph}
\author{Martin Knor$^{1}$, Jelena Sedlar$^{2,4}$, Riste \v{S}krekovski$^{3,4,5}$\\{\small $^{1}$ \textit{Slovak University of Technology in Bratislava,
Slovakia}}\\[0.1cm] {\small $^{2}$ \textit{University of Split, Faculty of civil
engineering, architecture and geodesy, Croatia}}\\[0.1cm] {\small $^{3}$ \textit{University of Ljubljana, Faculty of Mathematics
and Physics, Slovenia}}\\[0.1cm] {\small $^{4}$ \textit{University of Novo mesto, Faculty of
Information Studies, Slovenia}}\\[0.1cm] {\small $^{5}$ \textit{Rudolfovo -- Science and Technology Centre Novo
mesto, Slovenia}}}
\date{}
\maketitle

\begin{abstract}
A set $S$ of vertices of a graph is \emph{odd independent} if it is
independent and every vertex outside $S$ haseither zero or an odd number of
neighbors in $S$. The largest size of such a set is the odd independence
number $\alpha_{\mathrm{od}}$. Caro, Petru\v{s}evski, \v{S}krekovski and
Tuza~\cite{CPST2} conjectured that $\alpha_{\mathrm{od}}=1$ for every finite
Queen graph. They also asked whether the infinite Queen graph has
$\alpha_{\mathrm{od}}=1$ or $\alpha_{\mathrm{od}}=\infty$. We prove that
$\alpha_{\mathrm{od}}=1$ in both cases. In particular, in the case of an
infinite board we prove that $\alpha_{\mathrm{od}}=1$ holds on the quarter
plane and on the whole plane.

\end{abstract}

\textit{Keywords:} odd independence number; strong odd chromatic number; Queen
graph; chessboard graphs.

\textit{AMS Subject Classification numbers:} 05C69, 05C15

\section{Introduction}

\label{sec:uvod}

Let $G=(V,E)$ be a graph, finite or infinite, and let $N(v)$ denote the open
neighborhood of a vertex $v$. A set $S\subseteq V$ is an \emph{odd independent
set} if it is independent and, for every vertex $v\not \in S$, the set
$N(v)\cap S$ is empty or of odd size. The largest size of such a set is the
\emph{odd independence number} $\alpha_{\mathrm{od}}(G)$, introduced
in~\cite{CPST1}.

A proper coloring of $G$ is a \emph{strong odd coloring} if, for every vertex
$v$, each color occurring in $N(v)$ occurs there an odd number of times. The
least number of colors in such a coloring is the \emph{strong odd chromatic
number} $\chi_{\mathrm{so}}(G)$~\cite{CPSTso}. The odd independence number is
its counterpart on the independence side. Strong odd colorings have received
considerable attention recently~\cite{GKKPPU,KP,PMF}. Every color class of a
strong odd coloring is an odd independent set, so the two parameters satisfy
\[
\alpha_{\mathrm{od}}(G)\cdot\chi_{\mathrm{so}}(G)\geq\left\vert
V(G)\right\vert .
\]
Therefore a graph with $\alpha_{\mathrm{od}}(G)=1$ has $\chi_{\mathrm{so}%
}(G)=\left\vert V(G)\right\vert $, the largest value possible.

For the classical domination and independence problems on the chessboard we
refer the reader to~\cite{Harris}. The odd independence for chessboard graphs
was investigated systematically by Caro, Petru\v{s}evski, \v{S}krekovski and
Tuza~\cite{CPST2}. They determined $\alpha_{\mathrm{od}}$ and $\chi
_{\mathrm{so}}$ for the Rook, the Bishop and the $r$-King graph on a board of
any size, and also on several families of grids. Their investigation left the
Queen graph undecided. Only the boards with $n\leq10$ are settled there, by
computer search, and all of them give $\alpha_{\mathrm{od}}=1$. For the
infinite board they prove that no finite value other than $1$ is possible.
Hence, they conjecture that every finite Queen graph has $\alpha_{\mathrm{od}%
}=1$, and ask which of the two alternatives occurs on the infinite board.

A value as small as $1$ is not necessarily expected for a chessboard graph.
The king moves are exactly the queen moves of one step, yet the King graph of
size $n$ has odd independence number $\left\lceil n/3\right\rceil ^{2}%
$~\cite{CPST2}, which grows with the board.

We prove the conjecture and we answer the question for the infinite board.
Both come from one elementary observation. Every cell of the board lies on
exactly four lines, its row, its column and its two diagonals, and each of
them carries at most one element of an independent set $S$. Along the row of
an element of $S$, this count turns odd independence into a parity condition
on three membership statements. The condition makes no reference to the size
of the board. For a finite $S$, the four sides of the bounding box give four
such conditions, and together they yield the conjecture. For an infinite $S$,
the same condition becomes an identity between translates of three sets of
integers, and it settles the quarter plane and the whole plane alike.

Authors of \cite{CPST2} indepndently sketched a proof of Theorem
\ref{Tm_nemaBeskonacnog} based on exhaustive case analysis, but they did not
elaborate in detail as doing so would have significantly extended the paper.

\section{Preliminaries}

\label{sec:priprema}

The chessboard graphs are defined on the vertex set $\{1,\ldots,n\}\times
\{1,\ldots,n\}$, where the adjacencies correspond to the moving rules of
chess. We call $n$ the \emph{size} of the graph, after the side length of the
board on which it is taken. We use throughout the cellular representation
of~\cite{CPST2}, which identifies the $n^{2}$ vertices with the $n^{2}$ cells
by their coordinate pairs. A vertex $(a,b)$ is the cell in the intersection of
column $a$ and row $b$. In the \emph{Queen graph} two cells are adjacent
whenever a queen moves from one to the other, that is, $(a,b)\sim(a^{\prime
},b^{\prime})$ iff $a=a^{\prime}$, or $b=b^{\prime}$, or $a-a^{\prime
}=b-b^{\prime}$, or $a-a^{\prime}=b^{\prime}-b$. Keeping only the first two
adjacencies gives the \emph{Rook graph} $K_{n}\square K_{n}$, and keeping only
the last two gives the \emph{Bishop graph}. The Queen graph on a $p\times q$
board has vertex set $\{1,\ldots,p\}\times\{1,\ldots,q\}$ and the same
adjacency rule. The \emph{infinite} Queen graph is given by that rule on the
quarter plane $\mathbb{N}\times\mathbb{N}$ or on the whole plane
$\mathbb{Z}\times\mathbb{Z}$. The quarter plane is thus the union of the
boards $\{1,\ldots,n\}\times\{1,\ldots,n\}$ over all $n$. A set of vertices is
independent exactly when it is a set of pairwise non-attacking queens.

Every vertex of the Queen graph lies in the intersection of four cliques, its
row, its column and its two diagonals. Therefore the graph is $K_{1,5}$-free
and its diameter is $2$. Three tools used in~\cite{CPST2} nevertheless leave
it undecided. The general upper bounds there for $K_{1,r}$-free graphs ask the
graph to be regular or nearly so, whereas the degree of a cell $(a,b)$ equals
$4n-4-\left\vert a-b\right\vert -\left\vert a+b-n-1\right\vert $ and the
maximum is attained by at most four cells. Next, a \emph{claw-free} graph $G$,
one with no induced $K_{1,3}$, satisfies $\alpha_{\mathrm{od}}(G)=\alpha
(G^{2})$~\cite{CPST1}, where the \emph{square} $G^{2}$ joins two distinct
vertices whenever their distance in $G$ is at most $2$. Since the diameter is
$2$, the square of the Queen graph is complete, so this would settle it at
once. However, the graph is not claw-free, since a vertex can have three
pairwise nonadjacent neighbors on three different lines through it.

The third tool is the forbidden pair of~\cite{CPST1}, a pair of nonadjacent
vertices $x$ and $y$ with a common neighbor $z$ such that $N[z]\subseteq
N[x]\cup N[y]$, where $N[\cdot]$ denotes the closed neighborhood. No odd
independent set contains such a pair, nor a forcing pair, a companion notion
defined through forbidden pairs. In the Queen graph a line not through $x$
meets $N[x]$ in at most four cells, one on each of the four lines through $x$.
At least two of the four lines through $z$ pass through neither $x$ nor $y$,
so each of them meets $N[x]\cup N[y]$ in at most eight cells. On an infinite
board at most one line through a cell is finite, namely the antidiagonal on
the quarter plane, so one of those two lines is infinite and has cells outside
$N[x]\cup N[y]$, all of them in $N[z]$. Therefore the infinite Queen graph has
no forbidden pair and no forcing pair.

For the infinite board the following is known~\cite{CPST2}.

\begin{proposition}
\label{Prop_dihotomija}The infinite Queen graph has either $\alpha
_{\mathrm{od}}=\infty$ or $\alpha_{\mathrm{od}}=1$, but no other finite value
is possible.
\end{proposition}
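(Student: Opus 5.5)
The plan is to argue by contradiction. Suppose the infinite Queen graph has $\alpha_{\mathrm{od}}=k$ with $2\le k<\infty$, and fix an odd independent set $S$ with $|S|=k$. I would try to build from $S$ an odd independent set with more than $k$ elements, which contradicts maximality. The natural construction is a union of translates $S\cup(S+t)$, or more generally $\bigcup_i (S+t_i)$. Here the $t_i$ are chosen in \emph{general position}: no row, column, diagonal or antidiagonal through a cell of one copy meets another copy. On the quarter plane this is arranged by taking $t_i$ with huge, generic coordinates. Only finitely many lines pass through the cells already placed, and a generic translate avoids all of them, so the union is independent.

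Before any parity check, I would settle the quarter-plane boundary. Any vertex $v$ in $\mathbb{Z}^2$ lies on a row, a column and two diagonals, and these four lines carry at most four elements of $S$. I would first check, or reduce to, the fact that the odd-independence condition for $S$ also holds for vertices of $\mathbb{Z}^2$ outside the board. Alternatively, I would translate $S$ deep into the quarter plane and argue only with vertices inside it. Either way, the translates $S+t_i$ are themselves odd independent.

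The parity check then splits by the vertex $v\notin\bigcup_i(S+t_i)$. If $v$ sees only one copy, its count is $0$ or odd, inherited from that copy. The trouble comes when $v$ sees two or more copies. Every such $v$ lies on a line through some $s\in S+t_i$ and on another line through some $s'\in S+t_j$, and since $v$ lies on only four lines it sees at most four elements in total. In general position $v$ sees at most one element per line. So the dangerous configurations are the vertices that see exactly two elements, or exactly four, distributed over different copies.

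This is the step I expect to be the main obstacle: a naive union produces intersection points of lines from different copies where the count is $2$. My first attempt would be to use more than two copies, placed so that every crossing point of lines from two copies also lies on a line of a third copy. For example, I would place the translates $t_i$ along a common line direction in an arithmetic progression, so that the lines from the copies align and each crossing count becomes $1$ or $3$ rather than $2$. If that alignment cannot be forced, the fallback is an extremal argument inside $S$ itself. I would take the element of $S$ that is extreme in some direction, such as the top row, and look at cells on its row beyond the bounding box of $S$. Those cells see that element and possibly others through diagonals and columns, and I would use this to show directly that $|S|\ge2$ forces an infinite odd independent structure. Either route should yield an odd independent set of size greater than $k$, which gives the dichotomy of Proposition~\ref{Prop_dihotomija}.
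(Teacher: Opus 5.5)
Your main route has a genuine gap. For two copies $S$ and $S+t$ in general position, pick $s=(a,b)\in S$ and $s'=(a',b')\in S+t$. The cell $(a',b)$ lies on the row of $s$ and the column of $s'$, and for generic $t$ its diagonal and antidiagonal miss both copies. So it has exactly two neighbours in the union, and this already happens for $|S|=1$.

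You flag this yourself, but the repair by placing translates in arithmetic progression comes with no mechanism. It cannot get one cheaply: a finite union of translates of $S$ is again a finite set with at least two elements. Establishing its odd independence is therefore no easier than ruling out $S$ in the first place. Indeed, Theorem~\ref{Tm_nemaKonacnog} shows that no such set is odd independent, and Theorem~\ref{Tm_nemaBeskonacnog} rules out infinite unions as well. Any successful verification would have to extract a contradiction from $S$ directly, so the enlargement is a detour.

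There is a separate gap on the quarter plane. Translation is not an automorphism of $\mathbb{N}\times\mathbb{N}$, and odd independence of $S$ gives no information about cells outside the board. So $S+t_i$ need not be odd independent, and the ``fact'' you propose to check for vertices of $\mathbb{Z}^2$ is not implied by the hypothesis.

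The missing idea is that one cell suffices, with no enlargement at all. Let $s_1=(a_1,b_1)$ have the largest first coordinate, and let $s_2=(a_2,b_2)$ maximize $\delta=|b_2-b_1|>0$. Put $v=(a_1+\delta,b_2)$; this is where the row of $s_2$ meets one of the two diagonals through $s_1$.

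\begin{itemize}
\item The column of $v$ lies to the right of all of $S$.
\item Any element of $S$ on the other diagonal through $v$ has first coordinate at most $a_1$. It would therefore lie at vertical distance at least $2\delta$ from $s_1$, contradicting the choice of $s_2$.
\end{itemize}

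Hence $v\notin S$ has exactly the two neighbours $s_1,s_2$ in $S$, which is an immediate contradiction. Since $v$ has positive coordinates, the argument also works on the quarter plane.

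Your fallback, looking at cells on the row of an extremal element beyond the bounding box, points in this direction. However, you neither specify the cell nor count its neighbours. You also aim at an ``infinite odd independent structure'' rather than this even count of two, so as written it is not a proof.
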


\begin{proof}
[Sketch of the argument in~\cite{CPST2}]Assume to the contrary that
$1<k<\infty$ and let $S$ be an odd independent set of size $k$. Let
$(a_{1},b_{1})\in S$ be a vertex whose first coordinate is largest, and let
$(a_{2},b_{2})\in S$ be one for which $\left\vert b_{1}-b_{2}\right\vert $ is
largest. One then considers the vertex $v=(a_{0},b_{2})$, where $a_{0}%
=a_{1}-b_{1}+b_{2}$ if $b_{2}>b_{1}$ and $a_{0}=a_{1}+b_{1}-b_{2}$ if
$b_{2}<b_{1}$. Thus $v$ lies on the row of $(a_{2},b_{2})$ and on a diagonal
of $(a_{1},b_{1})$. This $v$ has \emph{exactly two} neighbors in $S$, namely
$(a_{2},b_{2})$ horizontally and $(a_{1},b_{1})$ diagonally, because the
column and the other diagonal through $v$ are disjoint from $S$. Since $2$ is
even, this contradicts the odd independence of $S$.
\end{proof}

This argument uses the \emph{infinite} board only to guarantee that the
constructed vertex $v$ lies on it. On a finite $n\times n$ board $v$ could
fall outside $\{1,\ldots,n\}^{2}$, and closing this gap is what the following
conjecture of~\cite{CPST2} asks for.

\begin{conjecture}
\label{Con_kraljica}The Queen graph of any finite size has $\alpha
_{\mathrm{od}}=1$.
\end{conjecture}

Every vertex of the Queen graph lies on exactly four lines, its row, its
column and its two diagonals. Each of them contains at most one element of an
independent set $S$, as Figure \ref{Fig01} shows. Hence $\left\vert N(v)\cap
S\right\vert \leq4$ holds for every vertex $v$. Therefore a set $S$ of
pairwise non-attacking queens is an odd independent set if and only if every
vertex $v\not \in S$ is attacked by $0$, $1$ or $3$ elements of $S$.

Throughout the rest of the paper the elements of an odd independent set $S$ of
cardinality $k$ are denoted by $s_{i}=(a_{i},b_{i})$ for $1\leq i\leq k$, and
they are always labeled so that $a_{1}<a_{2}<\cdots<a_{k}$. The independence
of $S$ means exactly that $a_{i}\not =a_{j}$, $b_{i}\not =b_{j}$ and
$\left\vert a_{i}-a_{j}\right\vert \not =\left\vert b_{i}-b_{j}\right\vert $
hold for all $i\not =j$. We call $w=a_{k}-a_{1}$ the \emph{width} and
$h=\max_{i}b_{i}-\min_{i}b_{i}$ the \emph{height} of $S$.

Further, $X=\{a_{1},\ldots,a_{k}\}$ and $Y=\{b_{1},\ldots,b_{k}\}$ denote the
sets of first, respectively second, coordinates of the elements of $S$. Denote
$c_{m}=a_{m}-b_{m}$ and $d_{m}=a_{m}+b_{m}$, and
\[
C=\{c_{1},\ldots,c_{k}\},\qquad D=\{d_{1},\ldots,d_{k}\}.
\]
Notice that both of these sets have exactly $k$ elements, since two elements
of $S$ with a common value of $c$ or of $d$ would lie on a common diagonal. We
call $c_{m}$ the \emph{diagonal value} and $d_{m}$ the \emph{antidiagonal
value} of $s_{m}$. Throughout, a \emph{diagonal} is a line $x-y=\mathrm{const}%
$ and an \emph{antidiagonal} a line $x+y=\mathrm{const}$. When both kinds are
meant we speak of \emph{the two diagonals} through a cell. We write
$\sigma(Z)$ for the sum of the elements of a finite set $Z$ of integers. For a
set $Z$ of integers and an integer $u$ we write $u-Z=\{u-z:z\in Z\}$,
$Z-u=\{z-u:z\in Z\}$ and $u+Z=\{u+z:z\in Z\}$.

\begin{lemma}
\label{Lema_cetiriLinije}Let $S$ be an independent set of the Queen graph of
any size, finite or infinite, and let $v=(x,y)$ be a vertex of the board with
$v\not \in S$. Then $\left\vert N(v)\cap S\right\vert $ equals the number of
true statements among
\[
x\in X,\qquad y\in Y,\qquad x-y\in C,\qquad x+y\in D.
\]

\end{lemma}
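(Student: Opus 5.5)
The plan is to partition the neighbourhood $N(v)$ according to the four lines through $v=(x,y)$. Every neighbour of $v$ lies on at least one of four sets: the column $\{(x,y'):y'\neq y\}$, the row $\{(x',y):x'\neq x\}$, the diagonal $\{(x',y'):x'-y'=x-y\}\setminus\{v\}$, and the antidiagonal $\{(x',y'):x'+y'=x+y\}\setminus\{v\}$. These four sets are pairwise disjoint. Any two distinct lines through $v$ meet only in $v$ itself: a column and a row meet in one point, a column and a diagonal meet in one point, and so on, and that point is $v$. Hence
\[
|N(v)\cap S|=\sum_{L}|(L\setminus\{v\})\cap S|,
\]
where $L$ ranges over the four lines through $v$. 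Since $v\notin S$, we may replace $L\setminus\{v\}$ by $L$ in each term.

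Next I will show that each term is $0$ or $1$, and that it equals $1$ exactly when the corresponding membership statement holds. The cells of a single line are pairwise adjacent in the Queen graph, so the independent set $S$ contains at most one of them. It remains to identify when the term is nonzero, line by line:
\begin{itemize}
\item The column $x$ contains an element of $S$ iff some $a_m=x$, that is, iff $x\in X$.
\item The row $y$ contains an element of $S$ iff $y\in Y$.
\item The diagonal through $v$ contains an element of $S$ iff some $c_m=a_m-b_m=x-y$, that is, iff $x-y\in C$.
\item The antidiagonal through $v$ contains an element of $S$ iff some $d_m=x+y$, that is, iff $x+y\in D$.
\end{itemize}
Summing the four indicators gives the claim.

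The only point needing care is the use of the hypothesis $v\notin S$. For instance, if $x\in X$ and the element of $S$ in column $x$ were $v$ itself, then the membership statement would be true while the column contributes no neighbour. The hypothesis excludes this. The argument never uses the board's size: all sets involved are subsets of the board, and the cells of $S$ are board cells whenever the membership statements hold. So the same proof covers finite boards, $p\times q$ boards, the quarter plane and the whole plane. I expect no genuine obstacle beyond stating the disjointness of the four lines cleanly.
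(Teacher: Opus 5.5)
Your proof is correct and follows essentially the same route as the paper. Both arguments split $N(v)\cap S$ over the four lines through $v$, which are cliques meeting pairwise only at $v$. Each line therefore carries at most one element of $S$, and it carries one exactly when the corresponding membership statement holds, while $v\notin S$ prevents double counting.
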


\begin{proof}
Exactly four lines of the board pass through $v$, namely its column, its row,
its diagonal and its antidiagonal, and $N(v)\cap S$ consists of the elements
of $S$ lying on one of them. Each line carries at most one element of $S$,
since $S$ is independent. The column of $v$ carries one exactly when $x\in X$,
the row exactly when $y\in Y$, the diagonal exactly when $x-y\in C$, and the
antidiagonal exactly when $x+y\in D$. Finally, no element of $S$ lies on two
of these four lines, because any two of them meet in $v$ alone and
$v\not \in S$. So the four statements count the elements of $N(v)\cap S$
without repetition.
\end{proof}

\begin{figure}[tbh]
\begin{center}
\includegraphics[scale=1]{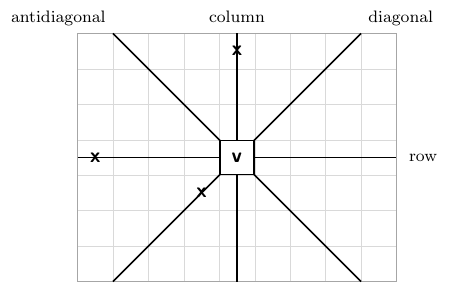}
\end{center}
\caption{The four lines through the cell $v$, with \textsf{\bfseries x}
marking an element of $S$. Each of the lines carries at most one element. Here
three of the four meet $S$, hence $\left\vert N(v)\cap S\right\vert =3$, which
is the count of Lemma \ref{Lema_cetiriLinije}.}%
\label{Fig01}%
\end{figure}

We use the count of Lemma \ref{Lema_cetiriLinije} one line at a time. Take an
element $(a,b)$ of $S$ and let $v=(x,y)$ be a cell of its row other than
$(a,b)$. The statement $y\in Y$ holds at $v$, because $y=b$ and $b\in Y$.
Therefore the count at $v$ is at least $1$, and odd independence makes it odd.
What remains is a parity condition on the other three statements, and this is
the only consequence of odd independence that we shall use.

\begin{lemma}
\label{Lema_retak}Let $S$ be an odd independent set of the Queen graph on a
board $B$ of any size and let $(a,b)\in S$. Then, for every integer $x$ such
that $(x,b)$ is a vertex of $B$, the number of true statements among
\[
x\in X,\qquad x-b\in C,\qquad x+b\in D
\]
is odd if $x=a$, and even if $x\not =a$. Symmetrically, for every integer $y$
such that $(a,y)$ is a vertex of $B$, the number of true statements among
$y\in Y$, $a-y\in C$ and $a+y\in D$ is odd if $y=b$, and even if $y\not =b$.
\end{lemma}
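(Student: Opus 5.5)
The plan splits into two cases, $x\neq a$ and $x=a$, and handles each by applying Lemma \ref{Lema_cetiriLinije} at the cell $v=(x,b)$ or by direct inspection of the four lines through $s=(a,b)$.

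\emph{Case $x\neq a$.} Here $v=(x,b)$ is not in $S$. The only element of $S$ in row $b$ is $(a,b)$, because $S$ is independent. By Lemma \ref{Lema_cetiriLinije}, $|N(v)\cap S|$ equals the number of true statements among $x\in X$, $b\in Y$, $x-b\in C$ and $x+b\in D$. The statement $b\in Y$ is true, so the count is at least $1$. Since $S$ is odd independent, the count is therefore odd. Removing the true statement $b\in Y$, the number of true statements among the remaining three is even, as claimed.

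\emph{Case $x=a$.} We check the three statements directly. The statement $a\in X$ holds, witnessed by $s$ itself. The statement $a-b\in C$ holds because $c$ of $s$ is $a-b$, and $a+b\in D$ holds likewise. So all three are true, and the count is $3$, which is odd.

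The column version, with $a$ fixed and $y$ varying, follows by the same argument with the roles of rows and columns exchanged. Swapping coordinates preserves the Queen graph structure: it exchanges $X$ and $Y$, replaces $C$ by $-C$, and keeps $D$. Alternatively, one can simply rerun both cases. For $y\neq b$ apply Lemma \ref{Lema_cetiriLinije} at $(a,y)$, where $a\in X$ is automatically true. For $y=b$ all three statements hold.

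There is no real obstacle here. The only points needing care are that $(x,b)\notin S$ when $x\neq a$, which holds because the row contains a single element of $S$, and that the cell $v$ is required to be a vertex of $B$ so that Lemma \ref{Lema_cetiriLinije} applies.
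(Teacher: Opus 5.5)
Your proof is correct and follows the paper's argument: for $x=a$ all three statements hold, and for $x\neq a$ the cell $(x,b)$ lies outside $S$, so Lemma \ref{Lema_cetiriLinije} together with the true statement $b\in Y$ makes $N(v)\cap S$ nonempty, hence of odd size, which leaves an even number of true statements among the other three. Your direct rerun of the argument along the column is how the paper handles the column version. The coordinate swap also works, except that on a $p\times q$ board it passes to a $q\times p$ board, where the row version applies just as well.
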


\begin{proof}
For $x=a$ all three statements hold, because $a\in X$, $a-b\in C$ and $a+b\in
D$, and $3$ is odd. Let $x\not =a$ and denote $v=(x,b)$, a vertex of $B$ by
assumption. Then $v\not \in S$, since the row $b$ already contains $(a,b)$
while $v\not =(a,b)$. By Lemma \ref{Lema_cetiriLinije}, $\left\vert N(v)\cap
S\right\vert $ is the number of true statements among $x\in X$, $b\in Y$,
$x-b\in C$, $x+b\in D$. The second of them is true, so $\left\vert N(v)\cap
S\right\vert \geq1$, and the odd independence of $S$ forces $\left\vert
N(v)\cap S\right\vert $ to be odd. Hence the number of true statements among
the remaining three is even. The second assertion is proved similarly as in
previous case, using the column of $(a,b)$ instead of its row.
\end{proof}

The argument for Proposition \ref{Prop_dihotomija} given above is the special
case of Lemma \ref{Lema_retak} that uses one cell. The vertex $v=(a_{0}%
,b_{2})$ constructed there lies in the row of $(a_{2},b_{2})\in S$, so the
lemma applies with $b=b_{2}$. The column and the other diagonal through $v$
miss $S$, so exactly one of the three statements at $x=a_{0}$ holds. It is
$a_{0}-b_{2}=a_{1}-b_{1}\in C$ in the first case and $a_{0}+b_{2}=a_{1}%
+b_{1}\in D$ in the second. One is an odd number, and the lemma allows no odd
number at a cell other than $(a_{2},b_{2})$. The lemma itself needs no such
cell. Its parity condition holds on every board, in every row and every column
that meets $S$.

\section{Finite odd independent sets}

\label{sec:konacni}

On a finite board every odd independent set is finite, and this is the case
treated by the conjecture. Throughout this section $S$ is finite. We apply
Lemma \ref{Lema_retak} only to cells of the bounding box of $S$. Every board
carrying $S$ contains that box, so these cells lie on the board, whichever
board it is. For a set $Z$ of integers and $\theta\in\mathbb{Z}$ we write
$Z^{\leq\theta}=Z\cap(-\infty,\theta]$ and $Z^{\geq\theta}=Z\cap\lbrack
\theta,\infty)$, and $\triangle$ denotes the symmetric difference.

\begin{lemma}
\label{Lema_prozor}Let $S$ be a finite odd independent set of the Queen graph
on a board of any size, and assume that $\min X=\min Y=0$, $\max X=w$ and
$\max Y=h$. Then, for every $(a,b)\in S$,
\[
(b+X)\ \triangle\ \bigl(2b+(C\cap\lbrack-b,w-b])\bigr)\ \triangle
\ \bigl(D\cap\lbrack b,b+w]\bigr)=\{a+b\}
\]
and
\[
Y\ \triangle\ \bigl(a-(C\cap\lbrack a-h,a])\bigr)\ \triangle\ \bigl((D\cap
\lbrack a,a+h])-a\bigr)=\{b\}.
\]
In the first identity all the sets are subsets of $[b,b+w]$, and in the second
one they are subsets of $[0,h]$.
\end{lemma}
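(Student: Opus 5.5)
We will derive both identities from Lemma~\ref{Lema_retak}, read as a statement about indicator functions, by recording for each integer $x\in[0,w]$ the parity of the three statements at the cell $(x,b)$. Since $\min X=\min Y=0$, $\max X=w$ and $\max Y=h$, the bounding box of $S$ is $[0,w]\times[0,h]$. Every board carrying $S$ contains this box, so for $(a,b)\in S$ each cell $(x,b)$ with $0\le x\le w$ is a vertex of the board, and Lemma~\ref{Lema_retak} applies to it. Writing $\mathbf 1_Z$ for the indicator function of $Z$ and working modulo $2$, the lemma states that
\[
\mathbf 1_X(x)+\mathbf 1_C(x-b)+\mathbf 1_D(x+b)\equiv[x=a]\pmod 2\qquad(0\le x\le w).
\]

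We next translate this into sets by substituting $t=x+b$, so that $t$ ranges over $[b,b+w]$. Then $x\in X$ is equivalent to $t\in b+X$. Next, $x-b\in C$ with $x\in[0,w]$ is equivalent to $t=2b+c$ for some $c\in C\cap[-b,w-b]$. Finally, $x+b\in D$ with $x\in[0,w]$ is equivalent to $t\in D\cap[b,b+w]$. Since $X\subseteq[0,w]$, all three sets $b+X$, $2b+(C\cap[-b,w-b])$ and $D\cap[b,b+w]$ lie in $[b,b+w]$, which gives the containment claim. A point lies in the symmetric difference of three sets exactly when it lies in an odd number of them. The displayed parity condition therefore says that, within $[b,b+w]$, the symmetric difference equals $\{a+b\}$. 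Outside that interval all three sets are empty, so the first identity holds.

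The second identity follows the same way from the column statement of Lemma~\ref{Lema_retak}, applied to the cells $(a,y)$ with $0\le y\le h$, which again lie in the bounding box. The condition $y\in Y$ is left as it is. The condition $a-y\in C$ with $y\in[0,h]$ means $y=a-c$ for some $c\in C\cap[a-h,a]$. The condition $a+y\in D$ means $y=d-a$ for some $d\in D\cap[a,a+h]$. All three sets lie in $[0,h]$, and the odd parity occurs only at $y=b$.

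There is no real obstacle here. The only point that needs care is restricting $C$ and $D$ to the correct windows: points of $C$ or $D$ outside those windows correspond to cells off the box, where nothing is asserted, and the truncation is exactly what removes them. It is also worth checking that the map $c\mapsto 2b+c$ is a bijection, so that membership is counted without repetition; each statement is a single membership test, so this holds.
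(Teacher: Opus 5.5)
Your proof is correct and follows the paper's argument essentially verbatim: both apply Lemma~\ref{Lema_retak} to the cells of the bounding box $[0,w]\times[0,h]$ lying in the row (respectively column) of $(a,b)$, substitute $n=x+b$, and observe that the truncated sets $2b+(C\cap[-b,w-b])$ and $D\cap[b,b+w]$ are exactly the parts of $2b+C$ and $D$ inside $[b,b+w]$. The only difference is that you write out the column case and the parity bookkeeping (via indicator functions) in slightly more detail than the paper does.
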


\begin{proof}
The board is a rectangle containing $S$, so it contains the bounding box
$[0,w]\times\lbrack0,h]$ of $S$. Hence $(x,b)$ is a cell of the board for
every $x\in\lbrack0,w]$, and Lemma \ref{Lema_retak} applies to it: the number
of true statements among $x\in X$, $x-b\in C$, $x+b\in D$ is odd for $x=a$ and
even for every other $x$. Write $n=x+b$. The three statements say that $n$
lies in $b+X$, in $2b+C$ and in $D$, and $n$ runs over $[b,b+w]$ as $x$ runs
over $[0,w]$. Therefore an element of $[b,b+w]$ lies in an odd number of the
three sets exactly when it equals $a+b$. The set $b+X$ lies inside $[b,b+w]$,
because $X\subseteq\lbrack0,w]$, and the parts of $2b+C$ and of $D$ inside
$[b,b+w]$ are $2b+(C\cap\lbrack-b,w-b])$ and $D\cap\lbrack b,b+w]$. This
proves the first identity. The second one follows in the same way from the
column of $(a,b)$.
\end{proof}

We now prove the first of the two main results of this paper. The proof
applies Lemma \ref{Lema_prozor} along the four sides of the bounding box of
$S$, as in Figure \ref{Fig02}.

\begin{figure}[tbh]
\begin{center}
\includegraphics[scale=1]{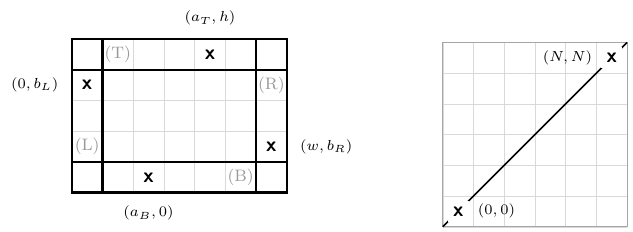}
\end{center}
\caption{Left, the bounding box $[0,w]\times\lbrack0,h]$ of $S$ and its four
sides $(\mathrm{B})$, $(\mathrm{T})$, $(\mathrm{L})$, $(\mathrm{R})$, along
which Lemma \ref{Lema_prozor} is read, with \textsf{\bfseries x} marking an
element of $S$. Right, the configuration the proof of Theorem
\ref{Tm_nemaKonacnog} ends with, two elements of $S$ in opposite corners of a
square box.}%
\label{Fig02}%
\end{figure}

\begin{theorem}
\label{Tm_nemaKonacnog}The Queen graph of any size, finite or infinite, has no
odd independent set $S$ with $2\leq\left\vert S\right\vert <\infty$.
\end{theorem}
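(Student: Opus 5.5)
Normalize by translation so that $\min X=\min Y=0$, $\max X=w$ and $\max Y=h$. Since $|S|\ge2$ and $S$ is independent, we have $w,h\ge1$. Lemma \ref{Lema_prozor} then applies to the four extremal elements of $S$, giving four identities, one along each side of the bounding box:
\begin{itemize}
\item (B): the element $(a,0)$ on the bottom side;
\item (T): the element $(a',h)$ on the top side;
\item (L): the element $(0,b)$ on the left side;
\item (R): the element $(w,b')$ on the right side.
\end{itemize}
The plan is to read each identity only at its two extreme points, that is, at the smallest and largest integers of the window $[b,b+w]$ or $[0,h]$. At these points the three sets involved are easy to control.

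Take (B) as the model case. Here the identity reads
\[
X\ \triangle\ (C\cap[0,w])\ \triangle\ (D\cap[0,w])=\{a\}.
\]
\begin{itemize}
\item At the point $0$: we have $0\in X$. Also $0\in D$ only if $(0,0)\in S$, and $0\in C$ only if some element lies on the main diagonal $x=y$. If $a\neq0$, the parity requirement at $0$ forces exactly one of these two extra memberships.
\item At the point $w$: we have $w\in X$. Also $w\in C$ only if $(w,0)\in S$, and $w\in D$ only if some element lies on the antidiagonal $x+y=w$. Again parity forces exactly one extra membership unless $a=w$.
\end{itemize}
Doing the same on the other three sides yields, for each of the four corners of the box, a statement of the following form: either a corner cell belongs to $S$, or some element of $S$ lies on a specific diagonal through that corner. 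The extremal elements themselves also sit on the sides, which adds further constraints.

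Next I would combine these four corner conditions into a case analysis. The goal is to show that the only consistent outcome is the configuration of Figure \ref{Fig02}: $w=h$ and $S$ contains two opposite corners, such as $(0,0)$ and $(w,w)$, or $(0,w)$ and $(w,0)$. Two such cells lie on a common diagonal, which contradicts independence, and this finishes the proof.

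To run the case analysis I would use a few simple facts about how the main diagonals of the box interact with its corners:
\begin{itemize}
\item A diagonal through a corner of a non-square box exits through a side, not through the opposite corner.
\item The element realizing "$0\in C$" lies on the line $x=y$ inside the box.
\item If $w\neq h$, the line $x=y$ and the line $x+y=w$ hit the sides at different places.
\end{itemize}
For the non-square case, say $w>h$ after possibly swapping the roles of rows and columns, I would first try to show that the conditions at the bottom-left and top-right corners cannot both be met. The bottom side forces the line $x=y$ to carry an element. The top side, via its own identity with $b=h$, forces the point $h$ of the window $[h,h+w]$ to relate to $2h+C$, and this pins down the same diagonal element inconsistently.

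The main obstacle I expect is this bookkeeping. Each extreme point admits several alternatives, for instance the extremal element being itself in the corner, or the same element serving as extremal for two sides. It is not clear that looking only at the endpoints suffices. If the endpoint analysis leaves residual cases, I would fall back on a global invariant: sum the identity of Lemma \ref{Lema_prozor} modulo 2 via $\sigma(\cdot)$ or via cardinalities over the window, and compare the totals obtained from opposite sides. The difference between the (B) and (T) identities involves only elements of $C$ and $D$ in shifted windows, and I would hope it forces $w=h$ directly.
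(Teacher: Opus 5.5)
\textbf{There is a genuine gap.} Your endpoint reading is correct and coincides with the paper's first step (its ``Corners''). The eight endpoints of the four windows give exactly four facts: $0\in C$, $w-h\in C$, $h\in D$ and $w\in D$. In words, the inward diagonal through each corner of the box carries an element of $S$. They give nothing more. The case analysis you plan on top of these facts cannot finish, because the facts are satisfiable.
\begin{itemize}
\item The independent set $\{(0,0),(2,3),(3,1),(5,4)\}$ has $w=5>h=4$ and passes all eight endpoint tests. The bottom-left and top-right conditions concern the different diagonals $x-y=0$ and $x-y=w-h$, so the inconsistency you expect for $w>h$ does not exist. Already $(0,0)$ and $(w,h)$ meet both conditions.
\item The $5$-queens solution $\{(0,0),(1,2),(2,4),(3,1),(4,3)\}$ has $w=h$ and passes all endpoint tests, yet contains no two opposite corners.
\end{itemize}
Both sets fail only at interior cells: $(1,0)$ has two neighbours in the first, and $(2,1)$ has four in the second. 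So the interior of the identities of Lemma \ref{Lema_prozor} must be used, and used quantitatively.

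Your fallback points the right way, but reducing modulo $2$ throws away exactly what is needed. The size $|A\triangle B\triangle C|$ determines only the parity of $|A|+|B|+|C|$ unless the overlaps are known. The missing idea is to prove there is no cancellation at all. Write the extremal elements as $(a_B,0)$, $(a_T,h)$, $(0,b_L)$, $(w,b_R)$.
\begin{enumerate}
\item Rewrite the bottom identity as $X\triangle\{a_B\}=D^{\leq w}\triangle C^{\geq 0}$. This gives $|D^{\leq w}|+|C^{\geq 0}|-2I_1=k-1$ with $I_1=|D^{\leq w}\cap C^{\geq 0}|$, and similarly $I_2,I_3,I_4$ for the other three sides.
\item Your four corner facts say that each threshold $0$, $w-h$, $h$, $w$ lies in the set it splits. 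Hence $|C^{\leq 0}|+|C^{\geq 0}|=k+1$, and the same holds for the other three splittings.
\item Adding the four equations gives $I_1+I_2+I_3+I_4=4$.
\item The extremal elements witness $I_i\geq 1$. For example, $(a_B,0)$ has diagonal and antidiagonal value $a_B$, so $a_B\in C^{\geq 0}\cap D^{\leq w}$.
\item Hence every $I_i=1$, and the four identities become multiset identities such as $X\uplus\{a_B\}=C^{\geq 0}\uplus D^{\leq w}$.
\end{enumerate}
Only now do cardinalities and sums carry information.
\begin{itemize}
\item Comparing cardinalities gives $|C^{\geq w-h}|=|C^{\leq 0}|$ and $|D^{\leq h}|=|D^{\geq w}|=|C^{\geq 0}|$. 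For $w>h$ the sets $C^{\leq 0}$, $C^{\geq w-h}$ are disjoint, and so are $D^{\leq h}$, $D^{\geq w}$. This yields $2(k+1)\leq 2k$, a contradiction.
\item For $w=h$, compare the element sums of the bottom and left identities, and of the top and right ones, using $\sigma(C)=\sigma(X)-\sigma(Y)$. This gives $a_B=b_L$ and $a_T=b_R$, which forces $(0,0)$ and $(w,w)$ into $S$. That is the final configuration you were aiming for.
\end{itemize}
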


\begin{proof}
Assume to the contrary that $S$ is an odd independent set with $k=\left\vert
S\right\vert $ and $2\leq k<\infty$. Reflecting the board in its main diagonal
maps odd independent sets to odd independent sets and interchanges the width
and the height of $S$, so we may assume $w\geq h$; translating, we may assume
$\min X=\min Y=0$, so that $w=\max X$ and $h=\max Y$. Since $\left\vert
Y\right\vert =k\geq2$ we have $h\geq1$, hence $w\geq1$. Note that
$X,Y\subseteq\lbrack0,w]$, $C\subseteq\lbrack-h,w]$ and $D\subseteq
\lbrack0,w+h]$. Let $(a_{B},0)$, $(a_{T},h)$, $(0,b_{L})$ and $(w,b_{R})$ be
the elements of $S$ lying in the bottom row, the top row, the left column and
the right column of the bounding box, as in Figure \ref{Fig02}. Applying Lemma
\ref{Lema_prozor} at $b=0$, at $b=h$, at $a=0$ and at $a=w$, and simplifying
the windows by $C\subseteq\lbrack-h,w]$ and $D\subseteq\lbrack0,w+h]$, we
obtain
\begin{align*}
(\mathrm{B})\quad D^{\leq w}\triangle X\triangle C^{\geq0}  &  =\{a_{B}\}, &
(\mathrm{T})\quad D^{\geq h}\triangle(h+X)\triangle(2h+C^{\leq w-h})  &
=\{a_{T}+h\},\\
(\mathrm{L})\quad(-C^{\leq0})\triangle Y\triangle D^{\leq h}  &  =\{b_{L}\}, &
(\mathrm{R})\quad(w-C^{\geq w-h})\triangle Y\triangle(D^{\geq w}-w)  &
=\{b_{R}\}.
\end{align*}

\emph{Corners.} An integer lies in a symmetric difference of three sets
exactly when it lies in an odd number of them. Apply this to $0$ in
$(\mathrm{B})$: here $0\in X$, while $0\in D$ holds if and only if $(0,0)\in
S$, that is, if and only if $a_{B}=0$, which is also when $0$ lies on the
right-hand side. Both alternatives give $0\in C$. In the same way $w+h$ in
$(\mathrm{T})$, $h$ in $(\mathrm{L})$ and $0$ in $(\mathrm{R})$ give $w-h\in
C$, $h\in D$ and $w\in D$. Finally, applying it to $b_{L}$ in $(\mathrm{L})$,
where $-b_{L}=c_{L}\in C^{\leq0}$ and $b_{L}\in Y$, gives $b_{L}\in D$.

\emph{No cancellation.} Rewrite $(\mathrm{B})$ as $X\triangle\{a_{B}\}=D^{\leq
w}\triangle C^{\geq0}$. The left-hand side has $k-1$ elements, since $a_{B}\in
X$, so $\left\vert D^{\leq w}\right\vert +\left\vert C^{\geq0}\right\vert
-2I_{1}=k-1$, where $I_{1}=\left\vert D^{\leq w}\cap C^{\geq0}\right\vert $,
and similarly for $(\mathrm{T})$, $(\mathrm{L})$, $(\mathrm{R})$ with $I_{2}$,
$I_{3}$, $I_{4}$. By the corners just found, each of the four thresholds $0$,
$w-h$, $h$, $w$ lies in the set it splits, so
\[
\left\vert C^{\leq0}\right\vert +\left\vert C^{\geq0}\right\vert =\left\vert
C^{\leq w-h}\right\vert +\left\vert C^{\geq w-h}\right\vert =\left\vert
D^{\leq h}\right\vert +\left\vert D^{\geq h}\right\vert =\left\vert D^{\leq
w}\right\vert +\left\vert D^{\geq w}\right\vert =k+1,
\]
and adding the four equations yields $I_{1}+I_{2}+I_{3}+I_{4}=4$. On the other
hand $a_{B}=c_{B}=d_{B}$, $d_{T}=2h+c_{T}$, $b_{L}=-c_{L}\in D$ and
$b_{R}=w-c_{R}=d_{R}-w$ exhibit an element of each of the four intersections,
so $I_{i}\geq1$ for every $i$ and therefore $I_{i}=1$ for every $i$. Hence
none of $(\mathrm{B})$, $(\mathrm{T})$, $(\mathrm{L})$, $(\mathrm{R})$
involves any cancellation, and each of them is an identity of multisets:
\begin{align*}
X\uplus\{a_{B}\}  &  =C^{\geq0}\uplus D^{\leq w}, & X\uplus\{a_{T}\}  &
=(h+C^{\leq w-h})\uplus(D^{\geq h}-h),\\
Y\uplus\{b_{L}\}  &  =(-C^{\leq0})\uplus D^{\leq h}, & Y\uplus\{b_{R}\}  &
=(w-C^{\geq w-h})\uplus(D^{\geq w}-w),
\end{align*}
where $\uplus$ denotes the union of multisets.

\emph{The case $w>h$ is impossible.} Comparing cardinalities in the four
multiset identities and using the four equations above gives $\left\vert
D^{\leq w}\right\vert =\left\vert C^{\leq0}\right\vert $, $\left\vert D^{\leq
h}\right\vert =\left\vert C^{\geq0}\right\vert $, $\left\vert D^{\geq
h}\right\vert =\left\vert C^{\geq w-h}\right\vert $ and $\left\vert D^{\geq
w}\right\vert =\left\vert C^{\leq w-h}\right\vert $, whence $\left\vert
C^{\geq w-h}\right\vert =\left\vert C^{\leq0}\right\vert $ and $\left\vert
C^{\leq w-h}\right\vert =\left\vert C^{\geq0}\right\vert $. Assume that $w>h$.
Then $C^{\leq0}$ and $C^{\geq w-h}$ are disjoint subsets of $C$, so
$2\left\vert C^{\leq0}\right\vert \leq k$; likewise $D^{\leq h}$ and $D^{\geq
w}$ are disjoint, so $2\left\vert C^{\geq0}\right\vert \leq k$. Adding the two
inequalities gives $2(k+1)\leq2k$, which is contradiction. Hence $w=h=:N\geq
1$, and then all four of the cardinalities above equal $t:=\tfrac{k+1}{2}$.
Notice that $k$ is odd, these cardinalities being integers.

\emph{Conclusion.} Let $\gamma^{-},\gamma^{+}$ be the sums of the elements of
$C^{\leq0}$, $C^{\geq0}$, and let $\delta^{-},\delta^{+}$ be the sums of the
elements of $D^{\leq N}$, $D^{\geq N}$. Since $0\in C$, we have $\gamma
^{-}+\gamma^{+}=\sigma(C)=\sigma(X)-\sigma(Y)$. Summing the elements on both
sides of the four multiset identities, the terms $tN$ cancel and we get
\begin{align*}
\sigma(X)+a_{B}  &  =\gamma^{+}+\delta^{-}, & \sigma(X)+a_{T}  &  =\gamma
^{-}+\delta^{+},\\
\sigma(Y)+b_{L}  &  =-\gamma^{-}+\delta^{-}, & \sigma(Y)+b_{R}  &
=-\gamma^{+}+\delta^{+}.
\end{align*}
Subtracting the two identities on the left gives $a_{B}-b_{L}=\gamma
^{+}+\gamma^{-}-\bigl(\sigma(X)-\sigma(Y)\bigr)=0$, and subtracting the two on
the right gives $a_{T}=b_{R}$. Now $(a_{B},0)$ and $(0,b_{L})$ are elements of
$S$ with the same antidiagonal value $a_{B}=b_{L}$, hence they are equal and
$(0,0)\in S$; and $(a_{T},N)$, $(N,b_{R})$ have the same antidiagonal value
$a_{T}+N=N+b_{R}$, hence $(N,N)\in S$. As $N\geq1$, these are two distinct
elements of $S$ on the diagonal $x-y=0$, which contradicts the independence of
$S$.
\end{proof}

\begin{corollary}
\label{Cor_konjektura}Every finite Queen graph has $\alpha_{\mathrm{od}}=1$.
Consequently the Queen graph of size $n$ has $\chi_{\mathrm{so}}=n^{2}$.
Moreover, every odd independent set of the infinite Queen graph is empty, a
singleton, or infinite.
\end{corollary}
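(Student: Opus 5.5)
The corollary is read off from Theorem \ref{Tm_nemaKonacnog}. I will treat its three assertions in turn, citing the inequality $\alpha_{\mathrm{od}}(G)\cdot\chi_{\mathrm{so}}(G)\geq|V(G)|$ from the introduction.

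\emph{Finite boards.} On a finite Queen graph every vertex set is finite. By Theorem \ref{Tm_nemaKonacnog} every odd independent set therefore has at most one element, so $\alpha_{\mathrm{od}}\leq 1$. For the reverse inequality, a singleton $\{v\}$ is odd independent: it is independent, and each vertex outside it has $0$ or $1$ neighbours in it, both allowed. Hence $\alpha_{\mathrm{od}}=1$.

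\emph{The value of $\chi_{\mathrm{so}}$.} Every colour class of a strong odd colouring is an odd independent set, so each class has size at most one. This already yields $\chi_{\mathrm{so}}\geq n^{2}$, which is exactly what the displayed inequality gives with $\alpha_{\mathrm{od}}=1$. For the matching upper bound, colour every vertex with its own colour. The colouring is proper, and each colour present in any neighbourhood occurs there exactly once, which is odd. Thus $\chi_{\mathrm{so}}=n^{2}$.

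\emph{Infinite boards.} Let $S$ be an odd independent set of the quarter plane or of the whole plane. If $S$ is finite and has at least two elements, Theorem \ref{Tm_nemaKonacnog} applies, since it is stated for boards of any size. So $S$ is empty, a singleton, or infinite. This refines Proposition \ref{Prop_dihotomija} without deciding between its alternatives.

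There is no real obstacle in any of these steps. The only care needed is to confirm that the theorem covers infinite boards; it does, because Lemma \ref{Lema_prozor} uses only cells of the bounding box of $S$. Excluding infinite $S$ is not part of this corollary and is deferred to the later analysis.
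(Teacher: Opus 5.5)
Your proposal is correct and follows the paper's own proof essentially step by step. Theorem~\ref{Tm_nemaKonacnog} together with the observation that a singleton is odd independent gives $\alpha_{\mathrm{od}}=1$. The colour-class bound, equivalently $\alpha_{\mathrm{od}}\cdot\chi_{\mathrm{so}}\geq n^{2}$, combined with the colouring that gives every vertex its own colour yields $\chi_{\mathrm{so}}=n^{2}$, and the infinite-board claim is read directly off the theorem, which is stated for boards of any size.
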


\begin{proof}
On a finite board every odd independent set is finite, so Theorem
\ref{Tm_nemaKonacnog} leaves only $\left\vert S\right\vert \leq1$. A singleton
is always odd independent, since every vertex has at most one neighbor in it.
For the second claim, $\alpha_{\mathrm{od}}\cdot\chi_{\mathrm{so}}\geq n^{2}$
together with $\alpha_{\mathrm{od}}=1$ gives $\chi_{\mathrm{so}}\geq n^{2}$,
while $\chi_{\mathrm{so}}\leq n^{2}$ holds for every graph on $n^{2}$
vertices. The last claim follows directly from Theorem \ref{Tm_nemaKonacnog}.
\end{proof}

Conjecture \ref{Con_kraljica} is thereby proved. Notice that the proof of
Theorem \ref{Tm_nemaKonacnog} uses only that the board is a rectangle. Hence
the theorem holds unchanged for the Queen graph on a $p\times q$ board, and so
does Corollary \ref{Cor_konjektura}, with $\alpha_{\mathrm{od}}=1$ and
$\chi_{\mathrm{so}}=pq$.

\section{Infinite odd independent sets}

\label{sec:beskonacni}

Theorem \ref{Tm_nemaKonacnog} does not settle the infinite board. It excludes
all finite cardinalities greater than $1$. For the infinite Queen graph this
recovers the dichotomy of Proposition \ref{Prop_dihotomija}, but it does not
decide between its two alternatives. The choice between them is the following
problem of~\cite{CPST2}.

\begin{problem}
\label{Problem_beskonacnaPloca}Decide whether the infinite Queen graph has
$\alpha_{\mathrm{od}}=\infty$ or $\alpha_{\mathrm{od}}=1$.
\end{problem}

The companion parameter $\chi_{\mathrm{so}}$ does not decide the problem.
Every line of an infinite board is an infinite clique, so every proper
coloring of the infinite Queen graph uses infinitely many colors. Strong odd
colorings are proper colorings, so $\chi_{\mathrm{so}}=\infty$ under both
alternatives of Problem \ref{Problem_beskonacnaPloca}, and its value does not
distinguish between them.

In this section we prove that $\alpha_{\mathrm{od}}=1$ on both infinite
boards, which settles Problem \ref{Problem_beskonacnaPloca}. Throughout the
section $B$ denotes an infinite board, either the quarter plane $\mathbb{N}%
\times\mathbb{N}$ or the whole plane $\mathbb{Z}\times\mathbb{Z}$, and $S$ is
an odd independent set of the Queen graph on $B$. The elements of $S$ are
labeled as before, but for an infinite $S$ the indices are mere labels, and
the convention $a_{1}<a_{2}<\cdots$ plays no role. The sets $X$, $Y$, $C$, $D$
and the values $c_{m}=a_{m}-b_{m}$, $d_{m}=a_{m}+b_{m}$ are defined as above.
The independence of $S$ says precisely that each of the four maps $m\mapsto
a_{m}$, $m\mapsto b_{m}$, $m\mapsto c_{m}$, $m\mapsto d_{m}$ is injective.

Lemma \ref{Lema_retak} holds on the whole plane for every integer $x$, and on
the quarter plane for every $x\geq1$. This range of $x$ is the only property
of the board used in this section. The next lemma collects the parity
conditions of Lemma \ref{Lema_retak}, one for each $x$, into one identity
between subsets of $\mathbb{Z}$. The values $x\leq0$, which the quarter plane
misses, produce the error term $E_{b}$ of that identity. Here $Z+u$ denotes
the translate $\{z+u:z\in Z\}$ of a set $Z$ of integers.

\begin{lemma}
\label{Lema_retakSkupovno}Let $S$ be an odd independent set of the Queen graph
on an infinite board $B$. Then, for every $(a,b)\in S$,
\[
(X+b)\triangle(C+2b)\triangle D=\left\{  a+b\right\}  \triangle E_{b},
\]
where $E_{b}\subseteq(-\infty,b]$. On the whole plane one has $E_{b}%
=\emptyset$.
\end{lemma}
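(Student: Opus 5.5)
The proof re-indexes the row condition of Lemma \ref{Lema_retak} by the antidiagonal value, as in Lemma \ref{Lema_prozor}, but now on the whole integer line. Fix $(a,b)\in S$. For an integer $n$ put $x=n-b$. Then
\[
n\in X+b \iff x\in X,\qquad n\in C+2b \iff x-b\in C,\qquad n\in D \iff x+b\in D .
\]
So $n$ lies in the left-hand side $L=(X+b)\triangle(C+2b)\triangle D$ exactly when an odd number of the three statements of Lemma \ref{Lema_retak} hold at $x=n-b$.

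We then simply define $E_b=L\triangle\{a+b\}$. The identity then holds trivially, and the content of the lemma is the claim about where $E_b$ can be nonzero.

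\emph{Whole plane.} Here $(x,b)$ is a vertex of $B$ for every integer $x$, so Lemma \ref{Lema_retak} applies at every $n$. It says $n\in L$ iff $x=a$, that is, iff $n=a+b$. Hence $L=\{a+b\}$ and $E_b=\emptyset$.

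\emph{Quarter plane.} Now $(x,b)$ is a vertex exactly when $x\ge 1$, since $b\ge1$ automatically because $(a,b)\in S\subseteq B$. For every $n$ with $n-b\ge1$, i.e. $n\ge b+1$, Lemma \ref{Lema_retak} again gives $n\in L\iff n=a+b$. So $L$ and $\{a+b\}$ agree on $[b+1,\infty)$. Their symmetric difference $E_b$ is therefore contained in $(-\infty,b]$, as claimed.

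One point deserves a remark: the symmetric difference of infinite sets is still well defined pointwise, with membership determined by the parity of the number of sets containing the point. Each point lies in at most three of the sets, so no convergence issue arises. There is no real obstacle. The only care needed is to check that $a+b$ itself lies in the range $n\ge b+1$ where the lemma applies, which holds because $a\ge1$; hence $E_b$ really excludes $a+b$.
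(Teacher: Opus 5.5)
Your proposal is correct and follows the paper's proof essentially verbatim: you re-index the row condition of Lemma \ref{Lema_retak} by $n=x+b$, let $E_b$ be the set where the two sides differ, and observe that this can only happen where $(n-b,b)$ is not a cell, which is never on the whole plane and only for $n\le b$ on the quarter plane. Your closing remark that $a+b\notin E_b$ because $a\ge 1$ is not needed for the lemma itself, but it is correct.
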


\begin{proof}
Let $x$ be an integer and denote $n=x+b$. The three statements $x\in X$,
$x-b\in C$ and $x+b\in D$ say that $n$ lies in $X+b$, in $C+2b$ and in $D$.
Therefore $n$ belongs to the symmetric difference of these three sets exactly
when an odd number of the three statements is true. If $(x,b)$ is a cell of
$B$, then by Lemma \ref{Lema_retak} this happens exactly for $x=a$, and $x=a$
means $n=a+b$. Hence the sets $(X+b)\triangle(C+2b)\triangle D$ and $\{a+b\}$
differ at most at those $n$ for which $(n-b,b)$ is not a cell of $B$, and
$E_{b}$ is the set of those $n$ at which they do differ. On the whole plane
every $(n-b,b)$ is a cell, so $E_{b}=\emptyset$. On the quarter plane
$(n-b,b)$ is a cell exactly for $n\geq b+1$, so $E_{b}\subseteq(-\infty,b]$.
\end{proof}

In the identity of Lemma \ref{Lema_retakSkupovno} the row $b$ enters only
through the two translation amounts $b$ and $2b$, while the three sets $X$,
$C$, $D$ are the same for all rows. Three elements of $S$ give the identity
for three distinct rows. The symmetric difference of two of these identities
cancels $D$, and translates of such symmetric differences then cancel $C$ or
$X$. Both steps are instances of one operation, which we now define.

For a multiset $T$ of integers $t_{1},\ldots,t_{k}$ and a set $A\subseteq
\mathbb{Z}$, we denote by $\mathcal{D}_{T}(A)$ the symmetric difference of the
translates of $A$ by the members of $T$,
\[
\mathcal{D}_{T}(A)=(A+t_{1})\triangle(A+t_{2})\triangle\cdots\triangle
(A+t_{k}).
\]
Since $A\triangle A=\emptyset$, only the parity of the multiplicities matters:
if $T^{\prime}$ is the set of those $t$ that occur in $T$ an odd number of
times, then $\mathcal{D}_{T}=\mathcal{D}_{T^{\prime}}$. Two further properties
follow at once from the definition:
\[
\mathcal{D}_{T}(A\triangle B)=\mathcal{D}_{T}(A)\triangle\mathcal{D}%
_{T}(B)\qquad\text{and}\qquad\mathcal{D}_{T}(A+u)=\mathcal{D}_{T}(A)+u.
\]

The next lemma performs the elimination: for a suitable multiset $T$, the sets
$\mathcal{D}_{T}(X)$, $\mathcal{D}_{T}(C)$ and $\mathcal{D}_{T}(D)$ are
bounded above, although $X$, $C$ and $D$ themselves need not be. Boundedness
above is all that the proof of Theorem \ref{Tm_nemaBeskonacnog} uses.

\begin{lemma}
\label{Lema_eliminacija}Let $S$ be an odd independent set of the Queen graph
on an infinite board $B$ having at least three elements. Let $s_{1}%
,s_{2},s_{3}$ be three of them and $b_{1},b_{2},b_{3}$ their rows, labeled so
that $b_{1}<b_{2}<b_{3}$. Denote
\[
T=\left\{  2b_{i}+b_{j}:i\not =j\right\}  ,
\]
a multiset with six members. Then $2b_{3}+b_{2}$ is strictly larger than the
remaining five, and the three sets
\[
R_{X}=\mathcal{D}_{T}(X),\qquad R_{C}=\mathcal{D}_{T}(C),\qquad R_{D}%
=\mathcal{D}_{T}(D)
\]
are bounded above. On the whole plane they are finite.
\end{lemma}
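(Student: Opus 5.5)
The plan is to combine the three identities of Lemma~\ref{Lema_retakSkupovno}, one for each of $s_1,s_2,s_3$, by the symmetric differences and translations described before the definition of $\mathcal{D}_T$. Write $s_i=(a_i,b_i)$ and let $F_i=\{a_i+b_i\}\triangle E_{b_i}$. Then Lemma~\ref{Lema_retakSkupovno} reads
\[
(X+b_i)\triangle(C+2b_i)\triangle D=F_i,\qquad i=1,2,3.
\]
Since $E_{b_i}\subseteq(-\infty,b_i]$, each $F_i$ is bounded above, and on the whole plane $F_i=\{a_i+b_i\}$ is finite. Every set built below from the $F_i$ by finitely many translations and symmetric differences is again bounded above, and finite on the whole plane. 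This is simply because a finite union of bounded-above sets is bounded above.

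The ordering claim is a direct comparison, since $b_3>b_2>b_1$. We have $2b_3+b_2>2b_3+b_1$, and $2b_3+b_2>2b_2+b_3$ because $b_3>b_2$. Each of $2b_2+b_1$, $2b_1+b_2$, $2b_1+b_3$ uses only coefficients on smaller $b$'s, so it is smaller than $2b_3+b_2$ as well.

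\emph{Eliminating $D$, then $C$.} Taking the symmetric difference of the identities for $i$ and $j$ cancels $D$:
\[
M_{ij}:=(X+b_i)\triangle(X+b_j)\triangle(C+2b_i)\triangle(C+2b_j)=F_i\triangle F_j .
\]
Now form $(M_{12}+2b_3)\triangle(M_{23}+2b_1)\triangle(M_{31}+2b_2)$.
\begin{itemize}
\item The $C$-terms are $C+2b_i+2b_j$ over unordered pairs $\{i,j\}$. Each such pair occurs twice, so they cancel.
\item The $X$-terms are $X+b_i+2b_k$, one for each ordered pair $(k,i)$ with $k\neq i$. This is exactly $\mathcal{D}_T(X)$.
\end{itemize}
Hence $R_X$ equals the symmetric difference of translates of the $F_i\triangle F_j$, which is bounded above, and finite on the whole plane.

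\emph{Eliminating $D$, then $X$.} Symmetrically, form $(M_{12}+b_3)\triangle(M_{23}+b_1)\triangle(M_{31}+b_2)$.
\begin{itemize}
\item The $X$-terms $X+b_i+b_j$ each occur twice and cancel.
\item The $C$-terms are $C+2b_i+b_k$ over ordered pairs with $i\ne k$, giving $\mathcal{D}_T(C)$.
\end{itemize}
So $R_C$ is bounded above as well.

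\emph{Handling $D$.} From the identity for $i=1$ we get $D=(X+b_1)\triangle(C+2b_1)\triangle F_1$. Applying $\mathcal{D}_T$ and using its linearity and translation properties gives
\[
R_D=(R_X+b_1)\triangle(R_C+2b_1)\triangle\mathcal{D}_T(F_1).
\]
Each of the three pieces is bounded above, and finite on the whole plane.

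The only real care needed is the bookkeeping: checking that the chosen translation amounts make exactly the six members of $T$ appear, each once, while the unwanted terms pair off. I expect this verification to be the main, though routine, obstacle.
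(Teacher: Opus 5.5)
Your proposal is correct and follows the paper's proof essentially step by step: you cancel $D$ pairwise, translate the three pairwise identities by $2b_k$ (resp.\ $b_k$) so that the $C$-terms (resp.\ $X$-terms) cancel in pairs and the surviving translates are exactly the six members of $T$, and then recover $R_D$ by applying $\mathcal{D}_T$ to the identity at $s_1$. The only soft spot is the phrase ``uses only coefficients on smaller $b$'s'' for the member $2b_1+b_3$, which is cleaner stated as $(2b_3+b_2)-(2b_1+b_3)=(b_3-b_1)+(b_2-b_1)>0$.
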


\begin{proof}
The rows of $s_{1},s_{2},s_{3}$ are pairwise distinct by independence, so the
required labeling exists. Each of the five other members of $T$ is smaller
than $2b_{3}+b_{2}$. Indeed, $b_{1}<b_{2}$ gives $2b_{3}+b_{1}<2b_{3}+b_{2}$,
and $b_{2}<b_{3}$ gives $2b_{2}+b_{3}<2b_{3}+b_{2}$. Moreover, $2b_{2}%
+b_{1}<2b_{2}+b_{3}$, $2b_{1}+b_{3}<2b_{2}+b_{3}$ and $2b_{1}+b_{2}%
<2b_{1}+b_{3}$.

Denote by $r_{i}$ the right-hand side of the identity of Lemma
\ref{Lema_retakSkupovno} at $s_{i}$, a set bounded above, and denote
$r_{ij}=r_{i}\triangle r_{j}$. The symmetric difference of the identities at
$s_{i}$ and at $s_{j}$ cancels $D$ and leaves
\[
(X+b_{i})\triangle(X+b_{j})\triangle(C+2b_{i})\triangle(C+2b_{j})=r_{ij}.
\]
Translate this identity by $2b_{3}$ for $(i,j)=(1,2)$, by $2b_{2}$ for
$(i,j)=(1,3)$ and by $2b_{1}$ for $(i,j)=(2,3)$, and take the symmetric
difference of the three results. The six translates of $C$ carry the shifts
$2b_{1}+2b_{2}$, $2b_{1}+2b_{3}$, $2b_{2}+2b_{3}$, each of them twice, so they
cancel in pairs. The six translates of $X$ carry exactly the members of $T$.
Hence
\[
R_{X}=(r_{12}+2b_{3})\triangle(r_{13}+2b_{2})\triangle(r_{23}+2b_{1}).
\]
Translating instead by $b_{3}$, $b_{2}$ and $b_{1}$ interchanges the roles of
$X$ and $C$: the translates of $X$ now carry $b_{1}+b_{2}$, $b_{1}+b_{3}$,
$b_{2}+b_{3}$, each of them twice, and the translates of $C$ carry the members
of $T$. Hence
\[
R_{C}=(r_{12}+b_{3})\triangle(r_{13}+b_{2})\triangle(r_{23}+b_{1}).
\]
Each of $R_{X}$ and $R_{C}$ is a symmetric difference of translates of $r_{1}%
$, $r_{2}$, $r_{3}$, so both are bounded above. Finally, the identity at
$s_{1}$ reads $D=(X+b_{1})\triangle(C+2b_{1})\triangle r_{1}$, and applying
$\mathcal{D}_{T}$ to it gives
\[
R_{D}=(R_{X}+b_{1})\triangle(R_{C}+2b_{1})\triangle\mathcal{D}_{T}(r_{1}),
\]
a set bounded above as well. On the whole plane each $r_{i}$ is a singleton,
so all three sets are finite.
\end{proof}

We are now in a position to prove the second main result of this paper.

\begin{theorem}
\label{Tm_nemaBeskonacnog}The Queen graph on an infinite board, be it the
quarter plane $\mathbb{N}\times\mathbb{N}$ or the whole plane $\mathbb{Z}%
\times\mathbb{Z}$, has no infinite odd independent set.
\end{theorem}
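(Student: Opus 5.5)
The plan is to convert the boundedness statements of Lemma \ref{Lema_eliminacija} into arithmetic information about $X$, $C$ and $D$, and then contradict that information with a counting argument inside triangles of the board.

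Assume that $S$ is infinite and fix three of its elements $s_{1},s_{2},s_{3}$ with $b_{1}<b_{2}<b_{3}$. Let $T$ be the multiset of Lemma \ref{Lema_eliminacija}. Identify a set $A\subseteq\mathbb{Z}$ with the formal series $A(z)=\sum_{n\in A}z^{n}$ over $\mathrm{GF}(2)$. Then $\mathcal{D}_{T}(A)$ corresponds to the product $p_{T}(z)A(z)$, where $p_{T}(z)=\sum_{t\in T}z^{t}$ is taken modulo $2$. By Lemma \ref{Lema_eliminacija} the exponent $2b_{3}+b_{2}$ occurs in $T$ exactly once, so $p_{T}\neq0$. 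Its degree equals $2b_{3}+b_{2}$ and its lowest exponent is $2b_{1}+b_{2}$.

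\emph{Quarter plane.} Here $X,C+\text{const},D$ are bounded below. Indeed $X,D\subseteq\mathbb{N}$, and the elements of $C$ are diagonal values $a-b$; the bound on $C$ needs care and is addressed below. The lemma says that $p_{T}X$, $p_{T}C$ and $p_{T}D$ are bounded above, so they are Laurent polynomials. Dividing by $p_{T}$ in $\mathrm{GF}(2)((z))$ then shows that $X$ and $D$ are eventually periodic. Since $S$ is infinite and each coordinate map is injective, $X$, $Y$ and $D$ are infinite. The symmetric argument uses the column form of Lemma \ref{Lema_retak}, with the roles of $a$ and $b$ exchanged, and shows that $Y$ is eventually periodic as well. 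Hence $X$, $Y$ and $D$ have positive upper densities $\delta_{X}$, $\delta_{Y}$ and $\delta_{D}$.

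I would then seek a contradiction from counting in triangles. The elements of $S$ with $d\leq M$ are exactly the elements inside the triangle $a+b\leq M$. There are about $\delta_{D}M$ of them, and each has $a\leq M$ and $b\leq M$. To compare this count with $X$ and $Y$, I would feed eventual periodicity back into the exact identity of Lemma \ref{Lema_retakSkupovno}. For $n$ large, its parity condition reads
\[
1_{X}(n-b)+1_{C}(n-2b)+1_{D}(n)\equiv0 \pmod 2
\]
for every $(a,b)\in S$, that is, for infinitely many $b$. A periodic pattern cannot satisfy this for arbitrarily large shifts $b$ unless the periods and densities obey rigid relations. I expect these relations to force the period of $D$ to divide the shifts in a way that puts two elements of $S$ on a common line.

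\emph{Whole plane.} Here the products $p_{T}A$ are finite, but multiplication by $p_{T}$ on two-sided sequences has a nontrivial kernel consisting of periodic sequences. So $X$, $C$ and $D$ are each a finite set plus a periodic set in each direction. From this point the plan is the same: exploit the identity of Lemma \ref{Lema_retakSkupovno} for infinitely many rows $b$, together with injectivity of the four coordinate maps.

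The main obstacle is this final step, turning \emph{eventually periodic} into an actual contradiction. I would first try to show that the periodic parts of $X$, $C$ and $D$ must be empty. If $X$ agreed with a nonempty periodic set of period $\pi$ in some tail, then choosing two elements of $S$ whose rows are congruent modulo $\pi$ should make the shifted identities incompatible with the single point $\{a+b\}$ on the right-hand side. That would leave $X$, $C$ and $D$ finite, contradicting the infiniteness of $S$.
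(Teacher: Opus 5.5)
Your proposal has a genuine gap: it stops exactly where the contradiction has to be produced. The preparatory part is fine as far as it goes. On the quarter plane $X$ and $D$ are bounded below and $p_TX$, $p_TD$ are Laurent polynomials, so $X$ and $D$ are eventually periodic. The decisive step, however, is only announced (``I expect these relations to force\dots'', ``should make the shifted identities incompatible''), and this is so on both boards.

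There is also an unrepaired error. On the quarter plane $C$ need not be bounded below, since $c=a-b$ can tend to $-\infty$. So you cannot divide $p_TC$ by $p_T$ in $\mathrm{GF}(2)((z))$; you flag this point and never return to it.

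It is also unclear that the periodicity/density route can close at all. Positive densities of $X$, $Y$, $D$ are not contradictory by themselves: there are infinite sets of pairwise non-attacking queens on $\mathbb{N}\times\mathbb{N}$ meeting every row and every column. The parity condition would therefore have to be used again, in a way you have not specified.

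The missing idea is to apply $\mathcal{D}_T$, i.e.\ multiplication by $p_T$, not to $X$, $C$, $D$ separately but to the whole identity of Lemma~\ref{Lema_retakSkupovno} at an arbitrary $(a,b)\in S$. Since $\mathcal{D}_T$ commutes with translations, this gives
\[
(R_X+b)\triangle(R_C+2b)\triangle R_D=(T^{\prime}+a+b)\triangle\mathcal{D}_T(E_b),
\]
with the \emph{same} three bounded-above sets $R_X,R_C,R_D$ for every element of $S$. In your language, this amounts to comparing the top exponents of $p_T(z)\bigl(z^bX(z)+z^{2b}C(z)+D(z)\bigr)$ and $p_T(z)\bigl(z^{a+b}+E_b(z)\bigr)$.

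Let $\tau=2b_3+b_2$, which has multiplicity one in $T$. The largest element on the right is $a+b+\tau$, because on the quarter plane $\mathcal{D}_T(E_b)\subseteq(-\infty,b+\tau]$ and $a\geq1$. In particular the right side is nonempty, so $R_X,R_C,R_D$ are not all empty.

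Now reduce to $Y$ unbounded above (automatic on the quarter plane, via $(x,y)\mapsto(-x,-y)$ on the whole plane). For large $b$ the largest element on the left is $2b+\max R_C$ if $R_C\neq\emptyset$. Otherwise it is $b+\max R_X$ if $R_X\neq\emptyset$, and otherwise it is $\max R_D$. In each case infinitely many elements of $S$ share the value of $a-b$, of $a$, or of $a+b$, contradicting independence. This leading-term comparison needs no periodicity, no density counting, and no lower bound on $C$.
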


\begin{proof}
Assume to the contrary that $S$ is an infinite odd independent set. We may
assume that $Y$ is unbounded above. On the quarter plane this is automatic,
because $Y$ is an infinite set of positive integers. On the whole plane the
infinite set $Y$ is unbounded above or unbounded below, and the automorphism
$(x,y)\mapsto(-x,-y)$ carries $S$ to an infinite odd independent set whose set
of rows is $-Y$.

Fix three elements of $S$ for the rest of the proof. Let $b_{1}<b_{2}<b_{3}$
be their rows, and let $T$, $R_{X}$, $R_{C}$, $R_{D}$ be as in Lemma
\ref{Lema_eliminacija}. Let $T^{\prime}$ be the set of members of $T$ of odd
multiplicity, and denote $\tau=2b_{3}+b_{2}$. By Lemma \ref{Lema_eliminacija}
$\tau$ is strictly larger than the other five members of $T$, so $\tau$ occurs
in $T$ exactly once, $\tau\in T^{\prime}$ and $\max T^{\prime}=\tau$. Since
$\mathcal{D}_{T}(\{n\})=T^{\prime}+n$ for every integer $n$, the operator
$\mathcal{D}_{T}$ turns the identity of Lemma \ref{Lema_retakSkupovno} into
\begin{equation}
(R_{X}+b)\triangle(R_{C}+2b)\triangle R_{D}=\left(  T^{\prime}+a+b\right)
\triangle\mathcal{D}_{T}(E_{b})\qquad\text{for every }(a,b)\in S.
\label{For_glavna}%
\end{equation}

We first determine the largest element of the right-hand side. In $T^{\prime
}+a+b$ it is $a+b+\tau$. If $E_{b}\not =\emptyset$, then $B$ is the quarter
plane and $a\geq1$, so every element of $\mathcal{D}_{T}(E_{b})$ is at most
$b+\tau<a+b+\tau$. Hence $a+b+\tau$ lies in the right-hand side and is its
largest element. In particular the right-hand side is nonempty, so $R_{X}$,
$R_{C}$, $R_{D}$ are not all empty.

Assume first that $R_{C}\not =\emptyset$. For all sufficiently large $b$ the
integer $2b+\max R_{C}$ exceeds both $b+\max R_{X}$ and $\max R_{D}$; if
$R_{X}$ or $R_{D}$ is empty, the corresponding term is omitted. For such $b$
that integer lies in $R_{C}+2b$ and in neither of the other two sets on the
left. Therefore it is the largest element of the left-hand side of
(\ref{For_glavna}), and comparing the two sides gives
\[
a-b=\max R_{C}-\tau.
\]
Since $Y$ is unbounded above, infinitely many elements of $S$ have such a row,
and all of them share the diagonal value $a-b$, contradicting the injectivity
of $m\mapsto c_{m}$.

Assume next that $R_{C}=\emptyset$ and $R_{X}\not =\emptyset$. The same
argument, with $b+\max R_{X}$ in place of $2b+\max R_{C}$, gives $a=\max
R_{X}-\tau$ for infinitely many elements of $S$. This contradicts the
injectivity of $m\mapsto a_{m}$. In the remaining case $R_{C}=R_{X}=\emptyset$
and $R_{D}\not =\emptyset$. Then the left-hand side of (\ref{For_glavna})
equals $R_{D}$ for every element of $S$, so $a+b=\max R_{D}-\tau$ holds for
all of them. This contradicts the injectivity of $m\mapsto d_{m}$.
\end{proof}

Theorems \ref{Tm_nemaKonacnog} and \ref{Tm_nemaBeskonacnog} together yield the
following corollary, answering Problem \ref{Problem_beskonacnaPloca}.

\begin{corollary}
\label{Cor_rijesenProblem}The infinite Queen graph, on the quarter plane as
well as on the whole plane, has $\alpha_{\mathrm{od}}=1$.
\end{corollary}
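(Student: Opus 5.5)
The corollary follows by combining the two main theorems. I would first show $\alpha_{\mathrm{od}}\geq1$ and then exclude every larger size.

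For the lower bound, take any single cell $S=\{v\}$ of the quarter plane or of the whole plane. It is trivially independent. Every vertex outside $S$ has either zero or one neighbor in $S$, and one is odd, so $S$ is odd independent. This is the same observation used in Corollary \ref{Cor_konjektura}. Hence $\alpha_{\mathrm{od}}\geq1$ on both boards.

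For the upper bound, let $S$ be any odd independent set of the Queen graph on $B$, where $B$ is either infinite board. Split into cases by the cardinality of $S$:
\begin{itemize}
\item If $2\leq|S|<\infty$, Theorem \ref{Tm_nemaKonacnog} rules $S$ out, since that theorem was stated for boards of any size, finite or infinite.
\item If $S$ is infinite, Theorem \ref{Tm_nemaBeskonacnog} rules it out, on the quarter plane and on the whole plane alike.
\end{itemize}
Therefore $|S|\leq1$ for every odd independent set. The supremum of the sizes is $1$, and it is attained, so $\alpha_{\mathrm{od}}=1$.

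There is no real obstacle here. The only point to check is that Theorem \ref{Tm_nemaKonacnog} really covers finite sets sitting on an infinite board. This holds because its proof only applies Lemma \ref{Lema_retak} to cells of the bounding box of $S$, and the box lies inside every board containing $S$, as noted at the start of Section \ref{sec:konacni}. This also shows that $\alpha_{\mathrm{od}}$ cannot be $\infty$ in the sense of finite sets of unbounded size, which answers Problem \ref{Problem_beskonacnaPloca}.
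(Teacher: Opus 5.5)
Your proposal is correct and is the same argument as the paper's, which derives the corollary directly from Theorems \ref{Tm_nemaKonacnog} and \ref{Tm_nemaBeskonacnog}. Singletons give the lower bound, Theorem \ref{Tm_nemaKonacnog} excludes finite sets of size at least $2$ on any board, and Theorem \ref{Tm_nemaBeskonacnog} excludes infinite sets; your check that Theorem \ref{Tm_nemaKonacnog} applies on infinite boards, because its proof uses only cells of the bounding box of $S$, is exactly the justification the paper relies on.
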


{\sloppy\bigskip\noindent\textbf{Acknowledgments.}~~The authors acknowledge
the partial support by Slovak research grants VEGA 1/0011/25, VEGA 1/0069/23,
APVV-23-0076 and APVV-22-0005, by ARIS projects J1-3002 and J1-70016, program
P1-0383, bilateral Slovenian-Croatian project BI-HR/25-27-004 and the annual
work program of Rudolfovo, by Project KK.\allowbreak01.\allowbreak
1.\allowbreak1.\allowbreak02.\allowbreak0027 co-financed by the European
Regional Development Fund, by the Croatian Science Foundation under project
number HRZZ-IP-2024-05-2130, by Croatian Ministry of Science, Education and
Youth through the bilateral Croatian-Slovenian project 2025-26, and by the
NextGeneration EU foundation via IP-UNIST-17 (GEORAZ). }

\medskip

{\noindent\textbf{AI declaration.}~~}Results in this manuscript were obtained
with help of Claude AI.
%The arguments have been reworked and carefully verified by the authors, who take full%
%responsibility for the content of the manuscript.%

\end{document}